\newcommand{\Z}{\ensuremath{\mathbb{Z}}\xspace}
\newcommand{\Q}{\ensuremath{\mathbb{Q}}\xspace}
\newcommand{\R}{\ensuremath{\mathbb{R}}\xspace}
\newcommand{\C}{\ensuremath{\mathbb{C}}\xspace}
\newcommand{\A}{\ensuremath{\mathbb{A}}\xspace}
\newcommand{\F}{\ensuremath{\mathbb{F}}\xspace}
\newcommand{\MaxSpec}{\ensuremath{\mathrm{MaxSpec}}\xspace}
\newcommand{\rSpec}{\ensuremath{\underline{\mathrm{Spec}}}\xspace}
\newcommand{\m}{\ensuremath{\mathfrak{m}}\xspace}
\newcommand{\mbar}{\ensuremath{\overline{\mathfrak{m}}}\xspace}
\newcommand{\rhobar}{\ensuremath{\overline{\rho}}\xspace}
\newcommand{\OO}{\ensuremath{\mathcal{O}}\xspace}
\newcommand{\T}{\ensuremath{\mathbb{T}}\xspace}
\newcommand{\comment}[1]{}
\newcommand{\GL}{\ensuremath{\mathrm{GL}}\xspace}
\DeclareMathOperator{\Gal}{Gal}
\DeclareMathOperator{\Hom}{Hom}
\newtheorem{theorem}{Theorem}
\newtheorem{proposition}[theorem]{Proposition}
\newtheorem{corollary}[theorem]{Corollary}
\newtheorem{lemma}[theorem]{Lemma}
\newtheorem{definition}[theorem]{Definition}
\newtheorem*{theor}{Theorem}
\newtheorem*{remark}{Remark}
\begin{document}

\title{Level raising and completed cohomology}
\author{James Newton}
\email{jjmn2@cam.ac.uk}
\begin{abstract}
We describe an application of Poincar{\'e} duality for completed homology spaces (as defined by Emerton) to level raising for $p$-adic modular forms. This allows us to give a new description of the image of Chenevier's $p$-adic Jacquet-Langlands map between an eigencurve for a definite quaternion algebra and an eigencurve for $\GL_2$. The points on the eigencurve at which we `raise the level' are (non-smooth) points of intersection between an `old' and a `new' component.
\end{abstract}
\maketitle
\bibliographystyle{amsalpha}

\section{Introduction}
In this note we describe an application of Poincar{\'e} duality for completed homology spaces (as defined by Emerton) to level raising for $p$-adic modular forms. Fix a prime $l \ne p$ and a positive integer $N$ coprime to $pl$. Let $D(N)$ and $D(N,l)$ be the reduced eigencurves of tame level $\Gamma_1(N)$ and $\Gamma_1(N)\cap\Gamma_0(l)$ respectively, constructed using the Hecke operators away from $N$ (we \emph{do} use the Hecke operators at $l$). Let $D(N,l)_c$ be the equidimensional closed rigid analytic subvariety of $D(N,l)$ whose points are those with irreducible attached residual Galois representation $\rhobar : \Gal(\overline{\Q}/\Q) \rightarrow \GL_2(\overline{\F}_p)$, and denote by $D(N)_c$ the analogous subvariety of $D(N)$. Define $D(N,l)^{\mathrm{new}}_c$ to be the the closed rigid analytic subvariety of $D(N,l)_c$ whose points come from overconvergent modular forms which vanish under the two natural trace maps from forms of level $\Gamma_1(N)\cap\Gamma_0(l)$ to forms of level $\Gamma_1(N)$. Our main result is the following, proved in section \ref{mainthm}:

\begin{theor}
$D(N,l)^{\mathrm{new}}_c$ is equidimensional of dimension $1$. It is equal to the Zariski closure of the classical $l$-new points in $D(N,l)_c$.
\end{theor}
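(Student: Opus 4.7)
The plan is to recast the definition of $D(N,l)^{\mathrm{new}}_c$ using Emerton's Poincar\'e duality for completed (co)homology, and then apply the eigenvariety machine to a ``new'' Hecke module so that the conclusion follows from standard general properties of eigenvarieties. Write $\widetilde{H}_{1,\mathrm{new}}$ for the cokernel of the pair of degeneracy maps from level $\Gamma_1(N)$ completed homology to level $\Gamma_1(N)\cap\Gamma_0(l)$ completed homology. Poincar\'e duality identifies $\widetilde{H}_{1,\mathrm{new}}$, after localising at any maximal ideal of the Hecke algebra attached to an irreducible residual representation $\rhobar$, with the dual of the kernel of the pair of trace maps on completed cohomology --- i.e.\ the module that directly defines $D(N,l)^{\mathrm{new}}_c$.

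Applying the eigenvariety construction used to build $D(N,l)_c$ to the finite slope Jacquet module of $\widetilde{H}_{1,\mathrm{new}}$ produces a rigid space $D^{\mathrm{new}}$ with a canonical finite morphism to $D(N,l)_c$. The duality of the previous paragraph, applied pointwise (and invoking the irreducibility of $\rhobar$), forces the set-theoretic image of this morphism to coincide with $D(N,l)^{\mathrm{new}}_c$. Since the eigenvariety attached to an orthonormalisable Banach module is automatically equidimensional of dimension equal to the dimension of weight space, $D^{\mathrm{new}}$ is equidimensional of dimension $1$, and hence so is $D(N,l)^{\mathrm{new}}_c$. The second assertion reduces to a density argument: classical $l$-new points of weight $\geq 2$ and small slope contribute to $\widetilde{H}_{1,\mathrm{new}}$ (the classical cohomology at level $Nl$ is spanned by the image of the degeneracy maps together with the $l$-new subspace), and Zariski density of such classical points in each irreducible component of an eigenvariety is a standard feature of the construction, yielding Zariski density of classical $l$-new points in $D(N,l)^{\mathrm{new}}_c$.

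The main obstacle will be the identification of the image of $D^{\mathrm{new}} \to D(N,l)_c$ with $D(N,l)^{\mathrm{new}}_c$: one must translate the pointwise kernel-of-traces condition at a point $x$ into the condition that $x$ be supported by the Poincar\'e dual module $\widetilde{H}_{1,\mathrm{new}}$, and it is here that Emerton's Poincar\'e duality, the Hecke equivariance of the trace and degeneracy formalisms, and the irreducibility of $\rhobar$ all enter in an essential way. Once this identification is established the rest of the argument is formal.
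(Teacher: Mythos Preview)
Your outline tracks the paper's strategy, but it elides the one genuinely hard step. You write ``since the eigenvariety attached to an orthonormalisable Banach module is automatically equidimensional'' and then apply this to the new module, but you give no reason why the new module \emph{is} orthonormalisable. A closed $G$-stable subspace of $\widehat{H}^1(K(N,l),E)_{\mbar}$, or the corresponding quotient of completed homology, has no automatic reason to be isomorphic to $\mathcal{C}(K_p,E)^r$ as a $K_p$-representation; without this structure the Fredholm/Buzzard--Chenevier machinery does not yield equidimensionality, and $D^{\mathrm{new}}$ could a priori have zero-dimensional components. This is not a technicality that ``pointwise Poincar\'e duality and irreducibility of $\rhobar$'' will handle.

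The paper supplies exactly this missing input: it proves that $\widehat{H}_1^{\mathrm{new}}(K(N,l),\OO)_{\mbar}$ is \emph{free} over $\OO[[K_p]]$ (Theorem~\ref{lrfree}), and the proof hinges on Ihara's lemma, which you do not mention. Ihara gives that the level-raising map $i$ on completed cohomology is injective, so dually $i^*$ on homology is surjective; then $\mathrm{ker}(i^*)$ is a direct summand of a free $\OO[[K_p]]$-module, hence free (as $\OO[[K_p]]$ is local for $K_p$ pro-$p$). The $\OO[[K_p]]$-valued Poincar\'e duality of Proposition~\ref{poin} and Lemma~\ref{newdual} --- not merely the $\OO$-valued duality you invoke --- then identifies $\widehat{H}_1^{\mathrm{new}}=\mathrm{coker}(i^{\dagger *})$ with $\Hom(\mathrm{ker}(i^*),\OO[[K_p]])$, which is free. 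Only after this is established does the eigenvariety argument run. The paper's final identification of $D^{\mathrm{new}}$ with $D(N,l)^{\mathrm{new}}_{\mbar}$ also differs from your sketch: rather than ``pointwise duality'', it observes that any point in the complement must lie on an $l$-old component of $D(N,l)_{\mbar}$, hence satisfy $T_l^2=(l+1)^2 S_l$, which forces the corresponding eigenclass into $\mathrm{ker}(i^\dagger\cdot i)\subset\mathrm{ker}(i^\dagger)$ and thus into $D^{\mathrm{new}}$ after all.
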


This answers a question raised by the main theorem of \cite{MR2111512}, where the image of a $p$-adic Jacquet-Langlands map is identified as the Zariski closure of the classical $l$-new points in $D(N,l)$. At least in the subspace $D(N,l)_c$, the above theorem allows a point in this image to be identified by a natural condition on the corresponding overconvergent modular form.

This theorem is equivalent to a level raising result on the eigencurve. Our investigation of this question was motivated by a conjecture made by Paulin, prompted by results on local-global compatibility for the eigencurve in his thesis \cite{Pa}. For each point $x$ of $D(N)_c$ there are $2$ points $x_1, x_2$ of $D(N,l)_c$ corresponding to the two roots of the $l$th Hecke polynomial for $x$. Suppose $T_l^2(x)-(l+1)^2 S_l(x)=0$. Equivalently, one of the two points $x_1, x_2$, say $x_1$, is in $D(N,l)^{\mathrm{new}}_c$. The above theorem implies that there is a $1$-dimensional irreducible component of $D(N,l)^{\mathrm{new}}_c$ passing through $x_1$. This irreducible component is an irreducible component of $D(N,l)_c$, with the property that every classical point on it is $l$-new. The point $x_1$ is not smooth, since two irreducible components (an old one and a new one) intersect there. 

This note is a complement to the paper \cite{chicomp}, where an analogous theorem is proved for the eigencurve corresponding to overconvergent automorphic forms on a definite quaternion algebra - in fact in that setting our results include the points where the attached residual Galois representation is reducible. This difference arises because in \cite{chicomp} a form of Ihara's lemma in characteristic $0$ is used, whereas in this paper we make use of the classical Ihara's lemma to show the injectivity of a level raising map between completed cohomology spaces. We make use of completed cohomology (and homology) because Poincar\'{e} duality provides us with a natural pairing between completed homology spaces, which are finitely generated modules under a non-commutative local ring. Constructing a suitable pairing between spaces of overconvergent modular forms, the approach taken in \cite{chicomp}, seems to be rather more difficult in the more geometric setting of overconvergent modular forms.

Similar results have been obtained by Paulin \cite{NonComp}, using a different approach which relies on the main result of \cite{Emlg} (and hence on Colmez's $p$-adic local Langlands correspondence). 

\section{Completed cohomology}
We begin by defining completed homology and cohomology spaces, following Emerton's seminal paper \cite{emint} and Calegari and Emerton's survey \cite{emcalsumm}.
Fix a finite extension $E$ of $\Q_p$, with ring of integers $\OO$ and uniformiser $\varpi$. Let $K^p$ be a compact open subgroup of $\GL_2(\A_f^p)$. If $K_p$ is a compact open subgroup of $G:=\GL_2(\Q_p)$ then we denote by $Y(K_p K^p)$ the open modular curve (over $\C$) of level $K_p K^p$ and write
$$H^i(K^p,A):= \varinjlim_{K_p} H^i(Y(K_p K^p),A),$$ where the limit is over all compact open $K_p$, and $A$ is either $E$, $\OO$ or $\OO/\varpi^s$ for some $s > 0$. Note that there is a natural $G$ action on $H^i(K^p,A)$, and a natural $G$-equivariant isomorphism
$$H^i(K^p,\OO)/\varpi^s H^i(K^p,\OO) \cong H^i(K^p,\OO/\varpi^s).$$
\begin{definition}\label{ccdef}We define the completed $i$th cohomology space of tame level $K^p$ to be $$\widehat{H}^i(K^p,\OO):= \varprojlim_s H^i(K^p,\OO)/\varpi^sH^i(K^p,\OO)\cong \varprojlim_s H^i(K^p,\OO/\varpi^s).$$ 
\end{definition}
Applying the analogue of this construction to compactly supported cohomology, we obtain the space $\widehat{H}^i_c(K^p,\OO).$ We can also form completed homology spaces in a dual manner:
\begin{definition}\label{chdef}The completed $i$th homology space of tame level $K^p$ is $$\widehat{H}_i(K^p,\OO):= \varprojlim_{K_p} H_i(Y(K_p K^p),\OO).$$
\end{definition} 
The same construction with Borel-Moore homology gives $\widehat{H}^{\mathrm{BM}}_i(K^p,\OO)$. We write $G_0$ for $\GL_2(\Z_p)\subset G$. The key finiteness property for the spaces we have defined above is contained in the following result:
\begin{proposition}\label{firstdual}
\begin{enumerate}
\item $\widehat{H}_i(K^p,\OO)$ and $\widehat{H}^{\mathrm{BM}}_i(K^p,\OO)$ are finitely generated left modules under the natural action of $\OO[[G_0]]$. The topology induced by the $\OO[[G_0]]$-module structure is equivalent to the projective limit topology on these spaces.
\item We have $\Hom_{\mathrm{cts}}(\widehat{H}^{\mathrm{BM}}_1(K^p,\OO),\OO) \cong \widehat{H}^1_c(K^p,\OO)$ and $\Hom_{\mathrm{cts}}(\widehat{H}^1_c(K^p,\OO),\OO) \cong \widehat{H}^{\mathrm{BM}}_1(K^p,\OO)$.
\end{enumerate}
\end{proposition}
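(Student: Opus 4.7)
The plan is to realise all of the completed (co)homology spaces as (co)homologies of a bounded complex of finitely generated free modules over an Iwasawa subalgebra of $\OO[[G_0]]$; finite generation for~(1) and the dualities for~(2) can then be read off from this complex. Fix a torsion-free open normal subgroup $K_p^{(0)} \trianglelefteq G_0$, and let $\Gamma$ denote the corresponding torsion-free arithmetic group (cut out by $K_p^{(0)} K^p$) acting on the upper half-plane $\mathbb{H}$. The Borel--Serre bordification $\widetilde{Y}^{\mathrm{BS}}$ of $\mathbb{H}$ carries a $\Gamma$-equivariant triangulation with finitely many $\Gamma$-orbits of simplices; let $C_\bullet$ be its cellular chain complex and $D_\bullet$ the cellular chain complex of the pair $(\widetilde{Y}^{\mathrm{BS}}, \partial\widetilde{Y}^{\mathrm{BS}})$. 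Each is a bounded complex of finite free $\mathbb{Z}[\Gamma]$-modules, and for any open normal $K_p \trianglelefteq K_p^{(0)}$, tensoring with $(\OO/\varpi^s)[K_p^{(0)}/K_p]$ over $\mathbb{Z}[\Gamma]$ recovers the singular (resp.\ relative-to-boundary) chain complex of $Y(K_p K^p)$ with $\OO/\varpi^s$-coefficients; taking homology therefore computes $H_i(Y(K_p K^p), \OO/\varpi^s)$, resp.\ $H^{\mathrm{BM}}_i(Y(K_p K^p), \OO/\varpi^s)$.

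For part~(1), boundedness and levelwise finite-freeness let the tensor product commute with the projective limit over $(K_p, s)$, and finiteness of the $\OO$-modules at each level makes the Mittag--Leffler condition apply, so taking homology also commutes with the limit:
\[
\widehat{H}_i(K^p, \OO) = H_i\bigl(C_\bullet \otimes_{\mathbb{Z}[\Gamma]} \OO[[K_p^{(0)}]]\bigr),
\]
and analogously $\widehat{H}^{\mathrm{BM}}_i(K^p, \OO) = H_i\bigl(D_\bullet \otimes_{\mathbb{Z}[\Gamma]} \OO[[K_p^{(0)}]]\bigr)$. Since $\OO[[K_p^{(0)}]]$ is Noetherian (for $K_p^{(0)}$ a uniform pro-$p$ group, by Lazard), these are finitely generated over $\OO[[K_p^{(0)}]]$, hence over the free overring $\OO[[G_0]]$ of rank $[G_0 : K_p^{(0)}]$. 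A topological Nakayama argument identifies the projective-limit topology with the canonical $\OO[[G_0]]$-module topology.

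For part~(2), equivariant chain-level Poincar\'{e}--Lefschetz duality on the oriented $2$-manifold-with-boundary $\widetilde{Y}^{\mathrm{BS}}$ provides a $\mathbb{Z}[\Gamma]$-linear quasi-isomorphism $D_\bullet \simeq \Hom_{\mathbb{Z}[\Gamma]}(C_{2-\bullet}, \mathbb{Z}[\Gamma])$. Dualising and tensoring yields, for each $K_p$ and $s$, a perfect $\OO/\varpi^s$-pairing
\[
H^{\mathrm{BM}}_1(Y(K_p K^p), \OO/\varpi^s) \times H^1_c(Y(K_p K^p), \OO/\varpi^s) \longrightarrow \OO/\varpi^s.
\]
The continuous $\OO$-dual of the profinite module $\widehat{H}^{\mathrm{BM}}_1(K^p, \OO)$ is $\varprojlim_s \varinjlim_{K_p} \Hom_{\OO/\varpi^s}\bigl(H^{\mathrm{BM}}_1(Y(K_p K^p), \OO/\varpi^s), \OO/\varpi^s\bigr)$, which under the above pairings equals $\widehat{H}^1_c(K^p, \OO)$. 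The reverse isomorphism follows by Pontryagin--Schikhof biduality, applicable because part~(1) exhibits $\widehat{H}^{\mathrm{BM}}_1(K^p, \OO)$ as a reflexive profinite $\OO$-module.

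The main technical obstacle is the bookkeeping of the pro- and ind-limits --- in particular reconciling the $\varprojlim_s \varinjlim_{K_p}$ defining $\widehat{H}^1_c$ with the $\varprojlim_{K_p, s}$ defining $\widehat{H}^{\mathrm{BM}}_1$ and with the continuous $\OO$-linear dual --- but all of this is cleanly controlled by the bounded finite-free complex $C_\bullet$. The substantive geometric ingredient is the equivariant chain-level Poincar\'{e}--Lefschetz duality, which ultimately reduces to cap product with a $\Gamma$-equivariant fundamental class on $\widetilde{Y}^{\mathrm{BS}}$.
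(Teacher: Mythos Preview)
Your argument is essentially correct and amounts to unpacking, from first principles, the results the paper simply quotes. The paper's own proof is a two-line citation: part~(1) is Theorem~1.1(1) of Calegari--Emerton's survey, and part~(2) is deduced from Theorem~1.1(3) of the same reference once one checks that $\widehat{H}^2_c(K^p,\OO)=0$ (Emerton, Prop.~4.3.6) and $\widehat{H}_0^{\mathrm{BM}}(K^p,\OO)=0$ (trivial, since each open modular curve is non-compact). Your Borel--Serre complex argument is exactly how Calegari--Emerton prove their Theorem~1.1, so in substance the two approaches coincide; you have simply chosen to reprove the cited input rather than invoke it. Working with $\OO/\varpi^s$-coefficients levelwise, and using that $\OO/\varpi^s$ is self-injective, is a clean way to get perfect pairings without having to separately verify the vanishing the paper uses.

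One small correction: the pairing $H^{\mathrm{BM}}_1 \times H^1_c \to \OO/\varpi^s$ in part~(2) is \emph{not} Poincar\'e--Lefschetz duality. Both groups are (co)homology of the \emph{same} pair $(\widetilde{Y}^{\mathrm{BS}},\partial\widetilde{Y}^{\mathrm{BS}})$, so the pairing is the tautological evaluation pairing coming from your complex $D_\bullet$ alone; its perfection is universal coefficients over the self-injective ring $\OO/\varpi^s$ (equivalently, the vanishing of $H_0^{\mathrm{BM}}$ at each finite level). The quasi-isomorphism $D_\bullet \simeq \Hom_{\mathbb{Z}[\Gamma]}(C_{2-\bullet},\mathbb{Z}[\Gamma])$ you wrote down instead yields $H^{\mathrm{BM}}_i \cong H^{2-i}$ (ordinary cohomology), which is not what is being asserted. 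So you may delete the Poincar\'e--Lefschetz step entirely and work only with $D_\bullet$; the rest of your limit bookkeeping then goes through as written.
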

\begin{proof}
The first part of the  proposition follows from the first part of Theorem 1.1 in \cite{emcalsumm}. As for the second part, Proposition 4.3.6 of \cite{emint} shows that $\widehat{H}^2_c(K^p,\OO)=0$, whilst viewing Borel-Moore homology as the homology of the compactified modular curve relative to the cusps, it is easy to see that $\widehat{H}_0^\mathrm{BM}(K^p,\OO)=0$. Our proposition now follows from the third part of Theorem 1.1 in \cite{emcalsumm}.
\end{proof}
\begin{remark}Theorem 1.1 of \cite{emcalsumm} also describes the connection between $\widehat{H}_i(K^p,\OO)$ and $\widehat{H}^i(K^p,\OO)$, which is slightly more complicated than in the compactly supported case, since $\widehat{H}^0(K^p,\OO)$ is non-zero.\end{remark}
\begin{definition} The Hecke algebra $\T(K^p)$ is defined to be the weakly closed $\OO$-algebra of $G$-equivariant endomorphisms of $\widehat{H}^1(K^p,\OO)$ topologically generated by Hecke operators $T_q, S_q$ for primes $q \nmid p$ that are unramified in $K^p$.
\end{definition}
\section{Non-optimal levels}
We now work with two fixed tame levels, $$K(N) := \{ g \in \GL_2(\widehat{\Z}^p) : g \equiv \begin{pmatrix}
\ast & \ast \\ 0 & 1
\end{pmatrix} \mod N\}$$  and $$K(N,l) := K(N)\cap \{ g \in \GL_2(\widehat{\Z}^p) : g \equiv \begin{pmatrix}
\ast & \ast \\ 0 & \ast
\end{pmatrix} \mod l\}.$$ We have a continuous morphism $\T(K(N,l)) \rightarrow \T(K(N))$, with image $\T(K(N))^{(l)}$ the subalgebra of $\T(K(N))$ generated by the Hecke operators away from $l$. The following definition can be found in section 1.2 of \cite{brem}. 
\begin{definition}An ideal $I$ of $\T(K(N))^{(l)}$ is \emph{Eisenstein} if the map $\lambda: T(K(N))^{(l)} \rightarrow T(K(N))^{(l)}/I$ satisfies $\lambda(T_q) = \epsilon_1(q) + \epsilon_2(q)$ and $\lambda(qS_q) = \epsilon_1(q)\epsilon_2 (q)$ for all $q \nmid plN$, for some characters $\epsilon_1,\epsilon_2 : \Z_{plN}^\times \rightarrow T(K(N))^{(l)}/I$. \end{definition}

Fix a maximal ideal $\m$ of $\T(K(N))^{(l)}$ which is not Eisenstein. We can pull back $\m$ to a maximal ideal $\mbar$ of $\T(K(N,l))$ (maximal since $\mbar$ will certainly contain $p$). 

\begin{proposition}
The natural maps $\alpha: \widehat{H}^1_c(K(N),\OO) \rightarrow \widehat{H}^1(K(N),\OO)$ and $\beta: \widehat{H}^1_c(K(N,l),\OO) \rightarrow \widehat{H}^1(K(N,l),\OO)$ become isomorphisms after localising at the non-Eisenstein maximal ideals $\m$ and $\mbar$ respectively. The same holds for the maps from usual to Borel-Moore homology.\end{proposition}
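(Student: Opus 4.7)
The plan is to use the open-closed decomposition of the compactified modular curve to relate compactly supported and usual cohomology via a boundary term supported at the cusps, and then to show that the Hecke action on this boundary term is Eisenstein, so that it dies after localization at $\m$.

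For each $K_p$ let $X(K_pK^p)$ be the standard compactification of $Y(K_pK^p)$, with boundary $\partial(K_pK^p)$ the finite set of cusps; write $j\colon Y \hookrightarrow X$ and $i\colon \partial \hookrightarrow X$ for the inclusions. The distinguished triangle $j_!\OO/\varpi^s \to Rj_*\OO/\varpi^s \to i_*i^*Rj_*\OO/\varpi^s \to [+1]$ yields a long exact cohomology sequence
\begin{equation*}
\cdots \to H^i_c(Y,\OO/\varpi^s) \to H^i(Y,\OO/\varpi^s) \to H^i_\partial(Y,\OO/\varpi^s) \to H^{i+1}_c(Y,\OO/\varpi^s) \to \cdots
\end{equation*}
where $H^i_\partial$ denotes $H^i(X, i_*i^*Rj_*\OO/\varpi^s)$. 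Since each cusp has a punctured-disc neighborhood in $Y$, only $i=0,1$ give non-zero terms, each a finite free $\OO/\varpi^s$-module indexed by the cusps of $K_pK^p$. Taking $\varinjlim_{K_p}$ and then $\varprojlim_s$ preserves exactness here (the terms at each finite level are finitely generated over $\OO/\varpi^s$, so Mittag--Leffler is trivial), producing a Hecke-equivariant long exact sequence of completed cohomology spaces in which the middle terms $\widehat{H}^i_\partial(K^p,\OO)$ are cusp-level completed cohomology spaces carrying an action of $\T(K^p)$.

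It therefore suffices to prove $\widehat{H}^i_\partial(K^p,\OO)_\m = 0$ for $i = 0, 1$. The finite-level cusp set of $Y(K_pK^p)$ fibres over a double coset space for the standard Borel $B \subset \GL_2$, and the unramified Hecke correspondences $T_q$ and $S_q$ (for $q \nmid plN$) restrict to the boundary through their action on $B$-cosets. A direct computation produces characters $\epsilon_1, \epsilon_2 \colon \Z_{plN}^\times \to \T(K^p)/\mathrm{Ann}(\widehat{H}^i_\partial(K^p,\OO))$ such that $T_q$ acts as $\epsilon_1(q) + \epsilon_2(q)$ and $qS_q$ as $\epsilon_1(q)\epsilon_2(q)$. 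Hence the annihilator is an Eisenstein ideal, no non-Eisenstein maximal ideal lies in the support, and $\widehat{H}^i_\partial(K^p,\OO)_\m = 0$. The localized long exact sequence then collapses to yield the isomorphism $\alpha_\m$, and the identical argument with $K(N,l)$ and $\mbar$ in place of $K(N)$ and $\m$ handles $\beta_{\mbar}$.

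For the homology statement one may either run the analogous argument using the Borel-Moore distinguished triangle in homology, or dualise via Proposition \ref{firstdual}: the continuous duals of these $\OO[[G_0]]$-modules commute with localization at $\m$ because the Hecke action is by continuous endomorphisms, so the dual map $\widehat{H}_1(K^p,\OO) \to \widehat{H}^{\mathrm{BM}}_1(K^p,\OO)$ inherits the isomorphism after localization. The main obstacle is the explicit identification of the Hecke action on $\widehat{H}^i_\partial(K^p,\OO)$ in terms of the Eisenstein characters $\epsilon_1, \epsilon_2$: one must track the normalization factors coming from the double coset decomposition of $T_q$ and $S_q$ and confirm that this finite-level computation passes through the completion $\varprojlim_s \varinjlim_{K_p}$. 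Once this is in hand, the non-Eisenstein hypothesis gives the vanishing immediately.
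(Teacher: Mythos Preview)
Your approach is correct and is essentially the geometric content underlying the references the paper invokes. The paper proceeds differently: it quotes Proposition~4.3.9 of \cite{emint} for surjectivity of $\alpha$, quotes Corollaire~3.1.3 of \cite{brem} to see that the kernel $\widehat{M}$ is Eisenstein in characteristic zero, and then runs a somewhat delicate argument to pass from $(M\otimes\overline{E})_\m=0$ to $\widehat{M}_\m=0$, using density of the uncompleted kernel $M$ inside $\widehat{M}$ together with $\varpi$-adic continuity of the projection $\widehat{M}\to\widehat{M}_\m$ (implicitly relying on semilocality of $\T(K^p)$, so that this projection is given by an idempotent). Your route through the boundary long exact sequence sidesteps this density manoeuvre entirely: you kill the boundary term itself after localisation, rather than first arguing over $\overline{E}$ and then descending. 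What you gain is a self-contained argument; what the paper gains is brevity by citation, at the cost of the extra completion-versus-localisation step.

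Two small imprecisions to fix. First, $\widehat{H}^i_\partial$ decomposes over cusp orbits, and distinct orbits may carry distinct pairs $(\epsilon_1,\epsilon_2)$; thus the global annihilator is only an \emph{intersection} of Eisenstein ideals, not necessarily a single one. The conclusion survives (if $\m$ contains this intersection then by primality it contains one of the factors, hence is Eisenstein), but you should phrase it as ``every prime in the support of $\widehat{H}^i_\partial$ is Eisenstein''. Second, your homology paragraph is brisk: commuting continuous $\OO$-duals with localisation at $\m$ is really the statement that localisation is application of a continuous idempotent (semilocality again), and to get a duality between $\widehat{H}_1$ and $\widehat{H}^1$ rather than $\widehat{H}^1_c$ you also need $\widehat{H}^0(K^p,\OO)_\m=0$, which follows from the same Eisenstein argument applied to $H^0$.
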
\begin{proof}Proposition 4.3.9 of \cite{emint} shows that the maps $\alpha$ and $\beta$ are surjective. Let $\widehat{M}$ denote the kernel of $\alpha$. Note that $\widehat{M}$ is a torsion free $\OO$-module, since $\widehat{H}^1_c(K(N),\OO)$ is a torsion free $\OO$-module. Corollaire 3.1.3 of \cite{brem} shows that $\widehat{M}$ is  `Eisenstein', i.e. if there is a system of Hecke eigenvalues $\lambda: T(K(N))^{(l)} \rightarrow L$, $L$ a finite extension of $E$, with $M_L^\lambda \neq 0$, then $\ker(\lambda)$ is an Eisenstein ideal. 

Note that $\widehat{M}$ contains a dense subspace $M:= \ker(H^1_c(K(N),\OO) \rightarrow H^1(K(N),\OO))$. In fact $\widehat{M}$ is equal to the $\varpi$-adic completion of $M$, one way to see this is using the explicit description of $\widehat{M}$ (and $M$) given by Proposition 3.1.1 of \cite{brem}. The space $M\otimes_{\OO} \overline{E}$ is spanned by Hecke eigenvectors, which have Eisenstein systems of eigenvalues, so $(M\otimes_{\OO} \overline E)_\m=0$ and hence $M_\m=0$, since $M$ is $\OO$-torsion free. Now $\widehat{M}_\m$ is a direct summand of $\widehat{M}$, and the projection $\widehat{M} \rightarrow \widehat{M}_\m$ is $\varpi$-adically continuous, with $M$ in the kernel, hence this projection is the zero map and $\widehat{M}_\m = 0$. So $\alpha$ does give an isomorphism after localising at $\m$. 

The same argument applies to the map $\beta$. The statement for homology follows from the duality of the second part of Proposition \ref{firstdual} (note that this duality holds for usual homology and cohomology after localising at a non-Eisenstein maximal ideal, since this kills $\widehat{H}^0(K^p,\OO)$) by the same argument we used to show that $\widehat{M}_\m = 0$.
\end{proof}
\begin{remark}In the proof of Proposition 7.7.13 of \cite{Emlgc} the fact that localising at non-Eisenstein maximal ideals induces an isomorphism between compactly supported completed cohomology and completed cohomology is used. The author thanks Matthew Emerton for communicating the above argument for why this follows from Corollaire 3.1.3 of \cite{brem}.
\end{remark}

Since the Hecke algebras are semilocal, localising at the maximal ideals $\m$ and $\mbar$ gives us direct summands of the original (co)homology spaces. Note that the second part of Proposition \ref{firstdual} now gives us dualities between $\widehat{H}^1(K(N),\OO)_\m$ (respectively $\widehat{H}^1(K(N,l),\OO)_{\mbar}$) and $\widehat{H}_1(K(N),\OO)_\m$ (respectively $\widehat{H}_1(K(N,l),\OO)_{\mbar}$).

For each compact open $K_p \subset G$ there are two degeneracy maps from $Y(K_pK(N,l))$ to $Y(K_pK(N))$, which in the limit give rise to a natural level raising map $$i : \widehat{H}^1(K(N),\OO)_\m^2 \rightarrow \widehat{H}^1(K(N,l),\OO)_{\mbar}.$$ 
There is a map in the other direction $$i^\dagger : \widehat{H}^1(K(N,l),\OO)_{\mbar} \rightarrow \widehat{H}^1(K(N),\OO)_\m^2$$ given by the inverse system of maps $$i^\dagger_s : H^1_c(K_pK(N,l),\OO/\varpi^s) \rightarrow H^1_c(K_pK(N),\OO/\varpi^s)^2$$ where $i^\dagger_s$ is the adjoint under Poicar\'{e} duality of the usual level raising map (as described in section 3 of \cite{DT} for the Shimura curve case). We can form the $\OO$-duals of these maps to get maps $i^*, i^{\dagger *}$ between homology spaces. 

A standard calculation shows that the composition $i^\dagger \cdot i$ acts by the matrix $\begin{pmatrix}
l+1 & T_l \\ S_l^{-1}T_l & l+1
\end{pmatrix}$ on $\widehat{H}^1(K(N),\OO)_\m^2$.

\begin{definition}
We define the $l$-new space to be $$\widehat{H}^1_{\mathrm{new}}(K(N,l),\OO)_{\mbar} := \mathrm{ker}(i^\dagger).$$ 
\end{definition}
By duality, we can identify the dual $\Hom_\mathrm{cts}(\widehat{H}^1_{\mathrm{new}}(K(N,l),\OO)_{\mbar},\OO)$ with $$\widehat{H}_1^{\mathrm{new}}(K(N,l),\OO)_{\mbar} := \widehat{H}_1(K(N,l),\OO)_{\mbar}/i^{\dagger *}(\widehat{H}_1(K(N),\OO)_{\m}^2).$$

There is a form of Ihara's lemma in this situation, which follows easily from the classical Ihara's lemma:
\begin{lemma}\label{ihsurj}
The map $i$ is an injection. Dually, the map $i^*$ is a surjection.
\end{lemma}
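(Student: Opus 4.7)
The plan is to reduce the statement to injectivity of $i$ at each finite level and each finite torsion; surjectivity of $i^*$ then follows automatically from continuous $\OO$-duality, since Proposition \ref{firstdual} together with the preceding proposition identifies the source and target of $i$ (and $i^*$) as duals of finitely generated $\OO[[G_0]]$-modules, for which a continuous injection dualises to a surjection. Unfolding the definitions,
$$i = \varprojlim_s \varinjlim_{K_p} i_{K_p,s}, \qquad i_{K_p,s}: H^1(Y(K_pK(N)),\OO/\varpi^s)_\m^2 \longrightarrow H^1(Y(K_pK(N,l)),\OO/\varpi^s)_{\mbar},$$
where $K_p$ runs over (small enough neat) compact open subgroups of $G$. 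Filtered direct limits preserve injections and projective limits are left exact, so it is enough to show that each $i_{K_p,s}$ is injective.

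The base case $s=1$ is precisely classical Ihara's lemma, applied to the pair $Y(K_p K(N))\to Y(K_p K(N,l))$ with $\F_p$ (equivalently $\OO/\varpi$) coefficients: the kernel of the level-raising map is supported on Eisenstein maximal ideals, by Ribet in the $\Gamma_0$-setting and by Diamond--Taylor in the $\Gamma_1$-auxiliary setting that covers our $K(N)$ and $K(N,l)$.

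For the inductive step I would apply the short exact sequence $0\to \OO/\varpi \to \OO/\varpi^s \to \OO/\varpi^{s-1} \to 0$ and examine the resulting long exact cohomology sequence on each of the two open modular curves. After localising at a non-Eisenstein maximal ideal the long exact sequence collapses to a short exact sequence: $H^0$ vanishes because the Hecke operators act on the constants by $T_q\mapsto q+1$ (an Eisenstein system), and $H^2$ vanishes because the open modular curves are non-compact Riemann surfaces, hence homotopy equivalent to $1$-dimensional CW complexes. Assembling these short exact rows into a commutative diagram with vertical maps $i_{K_p,1}$, $i_{K_p,s}$, $i_{K_p,s-1}$, the snake lemma, together with the already-established injectivity of $i_{K_p,1}$ and the inductive hypothesis on $i_{K_p,s-1}$, yields the injectivity of $i_{K_p,s}$.

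The principal obstacle is pinning down the exact form of classical Ihara's lemma needed for the tame level structures $K(N)$ and $K(N,l)$ (rather than straightforward $\Gamma_0$-level); the remaining devissage, the snake-lemma step, and the passage to the limit are all routine once the base case is in hand, as is the final duality argument deducing surjectivity of $i^*$ from injectivity of $i$.
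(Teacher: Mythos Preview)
Your proposal is correct and follows essentially the same route as the paper: reduce injectivity of $i$ to injectivity modulo $\varpi^s$, and deduce the latter from classical Ihara's lemma at non-Eisenstein maximal ideals. The only cosmetic difference is that the paper works at the direct-limit level over $K_p$ and packages your induction on $s$ as the single statement that $i:H^1(K(N),\OO)_{\mathfrak{M}}^2 \to H^1(K(N,l),\OO)_{\mathfrak{M}}$ is injective with $\OO$-torsion-free cokernel (which is exactly the standard formulation of Ihara's lemma in \cite{MR804706}), rather than spelling out the d\'evissage via the snake lemma; the content is the same.
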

\begin{proof}
It is enough to show that the induced map $$i : \widehat{H}^1(K(N),\OO)_\m^2/\varpi^s \rightarrow \widehat{H}^1(K(N,l),\OO)_{\mbar}/\varpi^s$$ is an injection for all $s$. Let $\T$ be the abstract Hecke algebra over $\OO$ generated by $T_q, S_q$ for primes $q \nmid plN$. It is clear that $\m$ and $\mbar$ pull back to the same non-Eisenstein maximal ideal $\mathfrak{M}$ of $\T$. Since $\widehat{H}^1(K(N),\OO)_\m/\varpi^s \cong H^1(K(N),\OO)_\mathfrak{M}/\varpi^s$ and $\widehat{H}^1(K(N,l),\OO)_{\mbar}/\varpi^s \cong H^1(K(N,l),\OO)_\mathfrak{M}/\varpi^s$ we just need to show that the map $$i : H^1(K(N),\OO)_\mathfrak{M}^2 \rightarrow H^1(K(N,l),\OO)_\mathfrak{M}$$ is injective with torsion free cokernel, which is the usual Ihara's lemma as in \cite{MR804706}.
\end{proof}

Poincar\'{e} duality at finite levels also gives rise to a duality between completed homology spaces:
\begin{proposition}\label{poin}
There are $\OO[[K_p]]$-module isomorphisms \begin{eqnarray*}\delta_1:\Hom(\widehat{H}_1(K(N),\OO)_\m,\OO[[K_p]])& \cong &\widehat{H}_1(K(N),\OO)_\m \\
 \delta_2:\Hom(\widehat{H}_1(K(N,l),\OO)_{\mbar},\OO[[K_p]]) &\cong &\widehat{H}_1(K(N,l),\OO)_{\mbar},\end{eqnarray*} where $K_p$ is any open subgroup of $G_0$.
\end{proposition}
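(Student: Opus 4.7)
The plan is to lift classical Poincar\'{e} duality on the modular curves $Y(K_p'K^p)$ to an $\OO[[K_p]]$-valued pairing on completed homology, and then specialise to the non-Eisenstein localisation to turn a pairing between $\widehat{H}_1$ and $\widehat{H}_1^{\mathrm{BM}}$ into a self-duality of $\widehat{H}_1$.

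Concretely, fix $K_p \subset G_0$ and let $K_p'$ range over open normal subgroups of $K_p$ small enough that each $Y(K_p'K^p)$ (with $K^p = K(N)$ or $K(N,l)$) is a smooth Riemann surface. Poincar\'{e} duality on this non-compact oriented surface gives a $K_p/K_p'$-equivariant perfect intersection pairing
\begin{equation*}
H_1(Y(K_p'K^p), \OO/\varpi^s) \otimes_{\OO/\varpi^s} H_1^{\mathrm{BM}}(Y(K_p'K^p), \OO/\varpi^s) \to \OO/\varpi^s.
\end{equation*}
Since the finite group ring $R_{s, K_p'} := \OO/\varpi^s[K_p/K_p']$ is Frobenius, the standard procedure $(c_1, c_2) \mapsto \sum_{g \in K_p/K_p'} \langle c_1, g \cdot c_2\rangle g^{-1}$ upgrades this to a perfect $R_{s, K_p'}$-valued pairing (left-linear in the second variable, and right-linear in the first after applying the anti-involution $g \mapsto g^{-1}$). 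Taking the inverse limit over $K_p'$ and $s$, and using $\OO[[K_p]] = \varprojlim R_{s, K_p'}$, I would then obtain a perfect $\OO[[K_p]]$-linear pairing
\begin{equation*}
\widehat{H}_1(K^p, \OO) \otimes \widehat{H}_1^{\mathrm{BM}}(K^p, \OO) \to \OO[[K_p]].
\end{equation*}
Finally, by the preceding proposition, the natural map $\widehat{H}_1 \to \widehat{H}_1^{\mathrm{BM}}$ is an isomorphism after localising at the non-Eisenstein $\m$ (respectively $\mbar$), so this pairing becomes a self-duality on $\widehat{H}_1(K(N), \OO)_{\m}$ (respectively $\widehat{H}_1(K(N,l), \OO)_{\mbar}$), delivering $\delta_1$ and $\delta_2$.

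The principal obstacle is verifying that the system of finite-level perfect pairings assembles into a perfect pairing of the limit $\OO[[K_p]]$-modules, i.e.\ that the natural map $\widehat{H}_1 \to \Hom_{\OO[[K_p]]}(\widehat{H}_1^{\mathrm{BM}}, \OO[[K_p]])$ induced by the finite-level pairings is an isomorphism. This uses the finite generation of $\widehat{H}_1$ and $\widehat{H}_1^{\mathrm{BM}}$ over $\OO[[G_0]]$ from Proposition \ref{firstdual}(1), together with Mittag-Leffler style control to commute $\Hom$ past the inverse limit. An arguably cleaner alternative is to phrase the whole argument in terms of Poincar\'{e} duality for the local system of $\OO[[K_p]]$-modules on $Y(K_pK^p)$ associated to the profinite cover $\varprojlim_{K_p'} Y(K_p'K^p)$, compressing the construction into a single Poincar\'{e} duality statement at the level of the base curve.
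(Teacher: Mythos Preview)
Your proposal is correct and matches the paper's approach: the paper's proof is a one-line appeal to the Poincar\'{e} duality spectral sequence of Calegari--Emerton, localised at $\m$ (resp.\ $\mbar$), which is precisely the packaged form of the limit-of-finite-level-dualities argument you sketch (your ``cleaner alternative'' is essentially this spectral sequence). That your unpacking is faithful to the paper's duality is confirmed in the proof of the next lemma, where the paper states explicitly that $\delta_1,\delta_2$ are induced by the finite-level maps $\Hom_{\OO[K_p/K_p']}(H_1,\OO[K_p/K_p']) \cong \Hom_{\OO}(H_1,\OO) \to H_1^{\mathrm{BM}}$ coming from classical Poincar\'{e} duality --- exactly the Frobenius-upgrade construction you describe.
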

\begin{proof}This follows from localising the Poincar\'{e} duality spectral sequence that can be found in \cite{emcalsumm}.
\end{proof}
\begin{lemma}\label{newdual}
We have a commutative diagram
$$\xymatrix{\Hom(\widehat{H}_1(K(N),\OO)_\m^2,\OO[[K_p]]) \ar[r]^{^t (i^*)}\ar[d]_{\sim}^{\delta_1 \oplus \delta_1} & \Hom(\widehat{H}_1(K(N,l),\OO)_{\mbar},\OO[[K_p]])\ar[d]_{\sim}^{\delta_2}\\
\widehat{H}_1(K(N),\OO)_\m^2\ar[r]^{i^{\dagger *}} & \widehat{H}_1(K(N,l),\OO)_{\mbar}
},$$ where the map $^t (i^*)$ is the $\OO[[K_p]]$-dual of $i^*$.
\end{lemma}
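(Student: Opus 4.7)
The plan is to translate the claimed commutativity into an adjointness statement for the maps $i^*$ and $i^{\dagger *}$ with respect to pairings on completed homology realising $\delta_1$ and $\delta_2$, and then to deduce this adjointness by reduction to finite level, where it follows from the very definition of $i^\dagger$ as the Poincar\'{e} adjoint of $i$.

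First I would reinterpret each $\delta_j$ of Proposition \ref{poin} as a perfect $\OO[[K_p]]$-valued pairing $(-,-)_j$ on the corresponding completed homology group, characterised by $\delta_j^{-1}(x) = (x, -)_j$. Chasing the diagram around both ways then reduces the lemma to the single identity
$$(i^{\dagger *}(x),\, y)_2 = (x,\, i^*(y))_1$$
for all $x \in \widehat{H}_1(K(N), \OO)_\m^2$ and $y \in \widehat{H}_1(K(N,l), \OO)_{\mbar}$.

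Next I would verify this identity at finite level. The pairings $(-,-)_j$ arise, via the Poincar\'{e} duality spectral sequence of \cite{emcalsumm}, as inverse limits of the classical Poincar\'{e} pairings on $H_1(Y(K_p' K^p), \OO/\varpi^s)$ as $K_p' \subset K_p$ and $s$ vary. At each such finite level, $i$ is the pullback $(\pi_1^*, \pi_2^*)$ on $H^1$ along the two degeneracy maps $\pi_j: Y(K_p' K(N,l)) \to Y(K_p' K(N))$, while $i^\dagger$ is the corresponding pushforward $(\pi_{1*}, \pi_{2*})$ on $H^1_c$, as recalled in section 3 of \cite{DT}. Dualising to homology and using the Poincar\'{e} identification $H_1 \cong H^1_c$ (valid after non-Eisenstein localisation, which kills $\widehat{H}^0$), the classical adjointness of pullback and pushforward under the Poincar\'{e} pairing becomes exactly the desired identity modulo $\varpi^s$ at each such level.

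The main obstacle is the compatibility bookkeeping: I would need to verify that the family of finite-level Poincar\'{e} pairings really does assemble, via the isomorphisms of the spectral sequence, to the $\OO[[K_p]]$-bilinear pairings realising $\delta_1$ and $\delta_2$, and that both paths around the diagram in the statement arise as inverse limits of the same finite-level constructions. Once this bookkeeping is in place, the adjointness at each finite level passes to the limit and yields the lemma.
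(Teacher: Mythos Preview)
Your approach is essentially the same as the paper's: both reduce the commutativity to finite level and invoke the classical Poincar\'{e} adjointness of $i$ and $i^\dagger$. The paper checks the maps agree on each finite-level quotient $H_1(K(N)K_p',\OO)_\m^2$ for $K_p' \trianglelefteq K_p$, and handles precisely the ``compatibility bookkeeping'' you flag by recording that the duality $\delta_j$ is induced at finite level by the composite
\[
\Hom_{\OO[K_p/K_p']}(H_1,\OO[K_p/K_p']) \cong \Hom_{\OO}(H_1,\OO) \to H_1^{\mathrm{BM}},
\]
with the first isomorphism a standard identification (Lemma 6.1 of \cite{emhida}) and the second classical Poincar\'{e} duality; this is exactly the missing ingredient you identified.
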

\begin{proof} It suffices to check that for each $K_p'$ an open normal subgroup of $K_p$, the map induced by $^t (i^*)$ on the finite level quotient $H_1(K(N)K_p',\OO)_\m^2$ is equal to the map induced by $i^{\dagger *}$. This follows from the adjointness of $i$ and $i^\dagger$ under classical Poincar\'{e} duality, and the fact that the duality of Proposition \ref{poin} is induced by the maps 
$$\Hom_{\OO[K_p/K_p']}(H_1(K(N)K_p',\OO),\OO[K_p/K_p']) \cong \Hom_{\OO}(H_1(K(N)K_p',\OO),\OO) \rightarrow H_1^{\mathrm{BM}}(K(N)K_p',\OO),$$
where the first map is a canonical isomorphism of $\OO[K_p/K_p']$-modules (as in Lemma 6.1 of \cite{emhida}) and the second map is given by classical Poincar\'{e} duality. 
%It is easiest to see that the diagram commutes after passing to completed cohomology, by applying the functor $\Hom_{cts}(-,\OO)$. We then have a map $$(^t i^*)^*: \widehat{H}^1(K(N,l),\OO)_{\mbar}%\rightarrow \widehat{H}^1(K(N),\OO)_\m^2,$$ and we want to show that this map equals $i^\dagger$. It suffices to show that $(^t i^*)^*$ and $i^\dagger$ coincide on the dense subspace $H^1(K(N,l),\OO)_{\mbar}%$, and this follows from showing that they coincide on $H^1(K(N,l)K_p',\OO)_{\mbar}$ for each open subgroup $K_p'$ of $K_p$.
\end{proof}

Say that a compact open subgroup $K_f$ of $\GL_2(\A_f)$ is \emph{neat} if $\GL_2(\Q)$ acts on $(\C \backslash \R) \times \GL_2 (\A_f )/K_f$ without fixed
points. The following proposition is due to Emerton, to appear in \cite{Emlg}:
\begin{proposition}\label{free}
 For a compact open $K_p \subset G$ which is pro-$p$, such that $K_pK(N)$ and $K_pK(N,l)$ are neat (equivalently $K_pK(N)$ is neat), $\widehat{H}_1(K(N),\OO)_\m$ and $\widehat{H}_1(K(N,l),\OO)_{\mbar}$ are free $\OO[[K_p]]$-modules.
\end{proposition}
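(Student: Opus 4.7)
My plan is to build a length-one free resolution of $M := \widehat{H}_1(K(N),\OO)_\m$ over $R := \OO[[K_p]]$ and then use Poincar\'e duality to split it, making $M$ projective and hence (since $R$ is local) free.

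Fix a cofinal system $(K_p^{(n)})$ of open normal subgroups of $K_p$. Neatness of $K_pK(N)$ ensures that the fundamental group of $Y(K_pK(N))$ is torsion-free and that each Galois cover $Y(K_p^{(n)}K(N)) \to Y(K_pK(N))$ carries a free $K_p/K_p^{(n)}$-action. Pulling back a finite CW decomposition of $Y(K_pK(N))$, we obtain for each $n$ a chain complex $C_\bullet^{(n)}$ of finitely generated free $\OO[K_p/K_p^{(n)}]$-modules in degrees $0,1,2$ computing $H_\bullet(Y(K_p^{(n)}K(N)),\OO)$. The inverse limit $C_\bullet := \varprojlim_n C_\bullet^{(n)}$ is then a complex of finitely generated free $R$-modules in degrees $0,1,2$ whose homology computes the completed homology of the tower.

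Localizing at $\m$, the preceding proposition identifies completed homology with completed Borel-Moore homology, and combined with the vanishings $\widehat{H}_0^{\mathrm{BM}}(K(N),\OO)=0$ (Proposition \ref{firstdual}) and $\widehat{H}_2(K(N),\OO)=0$ (the open modular curve is non-compact) the localized complex $C_{\bullet,\m}$ has only $H_1 = M$ nonzero. The differential $C_{1,\m} \to C_{0,\m}$ is then a split surjection with free target, so $Z := \ker(C_{1,\m} \to C_{0,\m})$ is a free $R$-module; injectivity of $C_{2,\m} \to Z$ yields the length-one free resolution
\[ 0 \to C_{2,\m} \to Z \to M \to 0. \]

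To finish it suffices to show $\mathrm{Ext}^1_R(M,R)=0$. For this I invoke the derived form of the Poincar\'e duality underlying Proposition \ref{poin}: the spectral sequence of \cite{emcalsumm} provides, after localization, a quasi-isomorphism $\mathbf{R}\Hom_R(C_{\bullet,\m}, R) \simeq C_{\bullet,\m}[2]$, the shift of $2$ reflecting the real dimension of the modular curve. Computing cohomology in degree $2$, the left side gives $\mathrm{Ext}^1_R(M,R)$ while the right side gives $H^0(C_{\bullet,\m})=0$; hence $\mathrm{Ext}^1_R(M,R)=0$ and $M$ is projective. Since $K_p$ is pro-$p$, $R$ is a complete Noetherian local ring, and a finitely generated projective module over such a ring is free. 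The same argument applies verbatim to $\widehat{H}_1(K(N,l),\OO)_\mbar$. The main obstacle is upgrading the $H^0$-level self-duality of Proposition \ref{poin} to a chain-level derived self-duality, i.e. checking that after localization the Poincar\'e duality spectral sequence degenerates so as to yield the quasi-isomorphism above and thereby control $\mathrm{Ext}^1_R(M,R)$.
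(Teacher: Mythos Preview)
The paper does not give a proof of this proposition; it is attributed to Emerton, to appear in \cite{Emlg}. So there is no argument in the paper to compare against, and your outline is a reasonable sketch of how such a proof proceeds.

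That said, there is a real gap in your execution. The CW chain complex $C_\bullet$ you build carries a $K_p$-action but \emph{not} a Hecke action: Hecke correspondences act on the homology of $C_\bullet$ (equivalently, on $C_\bullet$ as an object of the derived category), not on $C_\bullet$ term by term. Thus ``$C_{\bullet,\m}$'' is not an honest chain complex with terms $C_{i,\m}$; at best it is an idempotent summand of $C_\bullet$ in $D(R)$. Your sentence ``$C_{1,\m}\to C_{0,\m}$ is a split surjection with free target, so $Z$ is free'' therefore has no content, and you have not produced the claimed length-one free resolution of $M$. Without that resolution, $\mathrm{Ext}^1_R(M,R)=0$ alone does not force projectivity: over a Noetherian local ring one also needs $M$ to have finite projective dimension (so that a minimal free resolution terminates and the vanishing of $\mathrm{Ext}^n_R(M,R)$ at the top forces $n=0$).

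The ``main obstacle'' you flag is in fact the easier half. The Poincar\'e duality spectral sequence $E_2^{i,j}=\mathrm{Ext}^i_R(\widehat H_j,R)\Rightarrow \widehat H^{\mathrm{BM}}_{2-i-j}$ from \cite{emcalsumm} is Hecke-equivariant, so one may localize it at $\m$ directly; only the row $j=1$ survives, it degenerates at $E_2$, and one reads off $\mathrm{Ext}^i_R(M,R)\cong \widehat H^{\mathrm{BM}}_{1-i,\m}=0$ for all $i\ge 1$ without any chain-level statement. The genuinely missing ingredient is the finite projective dimension of $M$. One clean way to supply it is to note that a small enough pro-$p$ open $K_p$ is torsion-free, so that $\OO[[K_p]]$ is Auslander regular of finite global dimension and every finitely generated module automatically has finite projective dimension. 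Alternatively one can argue in $D(R)$ that the summand of the perfect complex $C_\bullet$ cut out by the idempotent at $\m$ is itself perfect and quasi-isomorphic to $M$ placed in degree $1$; but this requires working genuinely in the derived category, not manipulating fictitious modules $C_{i,\m}$.
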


\begin{theorem}\label{lrfree} 
If $K_p$ satisfies the conditions of the above proposition, then $\widehat{H}_1^{\mathrm{new}}(K(N,l),\OO)_{\mbar}$ is also a free $\OO[[K_p]]$-module.
\end{theorem}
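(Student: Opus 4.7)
The plan is to deduce freeness of $\widehat{H}_1^{\mathrm{new}}(K(N,l),\OO)_{\mbar}$ from Proposition \ref{free} by exploiting Ihara's lemma together with the $\OO[[K_p]]$-linear Poincar\'e duality of Proposition \ref{poin}. The key point is that this duality lands in $\OO[[K_p]]$ rather than merely in $\OO$, so the argument stays inside the category of $\OO[[K_p]]$-modules, where freeness is preserved both when splitting off the kernel of a surjection between free modules and when taking $\OO[[K_p]]$-linear duals.

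Concretely, Lemma \ref{ihsurj} gives a short exact sequence
$$0 \to K \to \widehat{H}_1(K(N,l),\OO)_{\mbar} \xrightarrow{i^*} \widehat{H}_1(K(N),\OO)_\m^2 \to 0$$
of finitely generated left $\OO[[K_p]]$-modules. By Proposition \ref{free} the right-hand term is free, so the sequence splits and $K$ is a finitely generated projective $\OO[[K_p]]$-module. Since $K_p$ is pro-$p$, $\OO[[K_p]]$ is a (non-commutative) local ring, and the usual Nakayama argument (lifting a basis of the residue-field quotient $K/\mathrm{rad}(\OO[[K_p]])K$) shows that its finitely generated projectives are free; hence $K$ is free. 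Now apply $\Hom_{\OO[[K_p]]}(-,\OO[[K_p]])$ to the split short exact sequence to obtain
$$0 \to \Hom(\widehat{H}_1(K(N),\OO)_\m^2,\OO[[K_p]]) \xrightarrow{{}^t(i^*)} \Hom(\widehat{H}_1(K(N,l),\OO)_{\mbar},\OO[[K_p]]) \to \Hom(K,\OO[[K_p]]) \to 0.$$
By Proposition \ref{poin} together with the commutativity of Lemma \ref{newdual}, this identifies with
$$0 \to \widehat{H}_1(K(N),\OO)_\m^2 \xrightarrow{i^{\dagger *}} \widehat{H}_1(K(N,l),\OO)_{\mbar} \to \widehat{H}_1^{\mathrm{new}}(K(N,l),\OO)_{\mbar} \to 0,$$
so $\widehat{H}_1^{\mathrm{new}}(K(N,l),\OO)_{\mbar} \cong \Hom_{\OO[[K_p]]}(K,\OO[[K_p]])$, and the $\OO[[K_p]]$-linear dual of a finitely generated free left module is again finitely generated free (viewing it as a left module via the anti-involution $g \mapsto g^{-1}$).

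The main conceptual point, and the reason one must use the $\OO[[K_p]]$-linear form of Poincar\'e duality rather than the naive $\OO$-valued duality, is that $\OO$-dualization would destroy freeness over $\OO[[K_p]]$; keeping the coefficients in $\OO[[K_p]]$ via Lemma \ref{newdual} puts us in a setting where dualization preserves freeness. Beyond this, no serious obstacle remains once Lemmas \ref{ihsurj} and \ref{newdual} and Propositions \ref{free} and \ref{poin} are in hand.
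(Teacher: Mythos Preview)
Your proof is correct and follows essentially the same route as the paper: split off $\ker(i^*)$ using freeness of the target, upgrade projective to free via locality of $\OO[[K_p]]$, then apply $\Hom_{\OO[[K_p]]}(-,\OO[[K_p]])$ and identify the resulting cokernel with $\widehat{H}_1^{\mathrm{new}}$ via Proposition \ref{poin} and Lemma \ref{newdual}. Your added remark on why the $\OO[[K_p]]$-valued duality (rather than $\OO$-valued) is essential is a helpful gloss but not a departure from the paper's argument.
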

\begin{proof}
By lemma \ref{ihsurj}, there is a short exact sequence
$$\xymatrix{
0\ar[r]&\mathrm{ker}(i^*)\ar[r]&\widehat{H}_1(K(N,l),\OO)_{\mbar}\ar[r]^{i^*}&\widehat{H}_1(K(N),\OO)_\m^2\ar[r]&0
}.$$
Since $\widehat{H}_1(K(N),\OO)_\m^2$ is free, there is a section to $i^*$, and so $\mathrm{ker}(i^*)$ is a direct summand of the free module $\widehat{H}_1(K(N,l),\OO)_{\mbar}$. Hence $\mathrm{ker}(i^*)$ is projective. But we assumed $K_p$ pro-$p$, so $\OO[[K_p]]$ is local implying (by the noncommutative version of Nakayama's lemma) that $\mathrm{ker}(i^*)$ is free. Now applying the functor $\Hom(-,\OO[[K_p]])$ to the above short exact sequence of free modules, we get a diagram
\scriptsize $$\xymatrix{
0\ar[r]&\Hom(\widehat{H}_1(K(N),\OO)_\m^2,\OO[[K_p]])\ar[r]\ar[d]^{\sim} &\Hom(\widehat{H}_1(K(N,l),\OO)_{\mbar},\OO[[K_p]])\ar[r]\ar[d]^{\sim} &\Hom(\mathrm{ker}(i^*),\OO[[K_p]])\ar[r]&0\\
&\widehat{H}_1(K(N),\OO)_\m^2 \ar[r]^{i^{\dagger *}}& \widehat{H}_1(K(N,l),\OO)_{\mbar}
},$$\normalsize
where the first row is short exact, the vertical maps are the isomorphisms provided by Proposition \ref{poin} and the horizontal map on the second row is identified as $i^{\dagger *}$ by Lemma \ref{newdual}. Hence this diagram gives an isomorphism of $\OO[[K_p]]$-modules between the free module $\Hom(\mathrm{ker}(i^*),\OO[[K_p]])$ and $\mathrm{coker}(i^{\dagger *})=\widehat{H}_1^{\mathrm{new}}(K(N,l),\OO)_{\mbar}$.
\end{proof}
\section{Eigencurves of newforms}\label{mainthm}
We now recall Emerton's eigenvariety construction (see sections 2.3 and 4 of \cite{emint}), which we will apply to the space $\widehat{H}^1_{\mathrm{new}}(K(N,l),E)_{\mbar}$, with the slight variation that we will use Hecke operators at $l$ as well. Applying the Jacquet functor (see \cite{MR2292633}) to the locally analytic vectors in $\widehat{H}^1_{\mathrm{new}}(K(N,l),E)_{\mbar}$ we get an essentially admissible $T(\Q_p)$ representation, where $T$ is a maximal torus of $\GL_2$. This corresponds to a coherent sheaf $\mathcal{E}$ on the rigid analytic space $\widehat{T}$ which classifies continuous characters of $T(\Q_p)$. Let $\T_l$ be the abstract Hecke algebra over $\OO$ generated by $T_q, S_q$ for primes $q \nmid plN$ and the operator $U_l$. The algebra $\T_l$ acts on $\mathcal{E}$, generating a coherent sheaf of algebras $\mathcal{A}$ on $\widehat{T}$. We have the relative spectrum $\widetilde{D}^{\mathrm{new}}:=\rSpec(\mathcal{A})$ of $\mathcal{A}$ over $\widehat{T}$, a rigid analytic space over $E$. We define $\widetilde{D}^{\mathrm{new}}$ to be the reduced rigid analytic space $\rSpec(\mathcal{A})^{\mathrm{red}}$.

We have the following corollary of Theorem \ref{lrfree}:

\begin{corollary}
The space $\widetilde{D}^{\mathrm{new}}$ is equidimensional of dimension $2$.
\end{corollary}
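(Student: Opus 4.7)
The plan is to derive equidimensionality of $\widetilde{D}^{\mathrm{new}}$ from Theorem \ref{lrfree} via Emerton's eigenvariety machinery. By Theorem \ref{lrfree}, for $K_p$ a neat pro-$p$ open subgroup of $G_0$ the module $\widehat{H}_1^{\mathrm{new}}(K(N,l),\OO)_{\mbar}$ is free over $\OO[[K_p]]$, and so (arguing with the continuous $\OO$-dual exactly as in the proof of Proposition \ref{firstdual}) the admissible Banach $G$-representation $\widehat{H}^1_{\mathrm{new}}(K(N,l),E)_{\mbar}$ is projective/co-free as a $K_p$-representation. This is precisely the hypothesis under which Emerton's construction produces a spectral variety of the expected dimension $\dim \widehat{T} = 2$.

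First I would verify that the new quotient slots cleanly into section 2.3 of \cite{emint}: passing to locally analytic vectors and applying the Jacquet functor $J_B$ produces an essentially admissible $T(\Q_p)$-representation, whose Schneider--Teitelbaum Fourier dual is by definition the coherent sheaf $\mathcal{E}$ on $\widehat{T}$. Next, I would show that the $\OO[[K_p]]$-freeness of the underlying Banach representation forces $\mathcal{E}$ to be flat over $\widehat{T}$: concretely, on any affinoid $U \subset \widehat{T}$ the sections $\mathcal{E}(U)$ should be identified with finite-slope subspaces cut out of a locally analytic Jacquet module coming from a free $\OO[[K_p]]$-module, and Buzzard--Coleman Riesz theory (in its Emerton-style reformulation via \cite{MR2292633}) yields that these are finitely generated projective over $\mathcal{O}(U)$.

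Once flatness of $\mathcal{E}$ over $\widehat{T}$ is in hand, equidimensionality is formal. The sheaf $\mathcal{A}$ is a coherent $\mathcal{O}_{\widehat{T}}$-subalgebra of $\End_{\mathcal{O}_{\widehat{T}}}(\mathcal{E})$, so $\rSpec(\mathcal{A})$ is finite over $\widehat{T}$; flatness of $\mathcal{E}$ together with faithfulness of the action of $\mathcal{A}$ forces every irreducible component of $\rSpec(\mathcal{A})$ to surject onto an irreducible component of the smooth, $2$-dimensional rigid space $\widehat{T}$, so every irreducible component has dimension exactly $2$. Passing to the nilreduction does not change the underlying reduced rigid space, and the conclusion follows.

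The main obstacle will be the middle step: tracking the $\OO[[K_p]]$-freeness of integral completed homology through the functors of locally analytic vectors and of Jacquet, and converting it into a flatness statement for $\mathcal{E}$ over $\widehat{T}$. All of the pieces are already developed in \cite{emint} and \cite{MR2292633}, so the real work is to check that the new quotient $\widehat{H}^1_{\mathrm{new}}(K(N,l),E)_{\mbar}$, cut out by the Hecke-theoretic kernel of $i^\dagger$, inherits the projectivity furnished by Theorem \ref{lrfree} cleanly enough to be fed into that framework without surprises.
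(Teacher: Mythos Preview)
Your overall strategy---use Theorem \ref{lrfree} to get $\OO[[K_p]]$-freeness, dualize to obtain $V \cong \mathcal{C}(K_p,E)^r$ as a $K_p$-representation, and feed this into Emerton's machinery---is exactly how the paper begins. The gap is in the middle step, where you assert that $\widehat{T}$ is $2$-dimensional and that $\mathcal{E}$ is flat over it. Neither holds: $T$ is the diagonal torus of $\GL_2$, so $T(\Q_p)\cong(\Q_p^\times)^2$ and $\widehat{T}$ has dimension $4$ (each factor $\widehat{\Q_p^\times}$ contributes one dimension from $\widehat{\Z_p^\times}$ and one from $\widehat{p^{\Z}}\cong\mathbb{G}_m$). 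The support of $\mathcal{E}$ is the $2$-dimensional eigenvariety sitting inside this $4$-dimensional space, so $\mathcal{E}$ cannot be flat over $\widehat{T}$, and your argument ``finite $+$ flat over a smooth $2$-dimensional base $\Rightarrow$ equidimensional of dimension $2$'' collapses.

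What the paper actually does is work relative to \emph{weight space} $\widehat{T}_0$ (characters of the compact part $T_0=T\cap K_p$), which \emph{is} $2$-dimensional. The freeness from Theorem \ref{lrfree}, via Proposition 4.2.36 of \cite{MR2292633}, puts the dual of the Jacquet module over an affinoid piece $\MaxSpec(A_n)\subset\widehat{T}_0$ into the shape $E\{\{z,z^{-1}\}\}\hat\otimes_{E[z]}(W_n\hat\otimes_E A_n)$ with $z$ acting compactly on $W_n\hat\otimes_E A_n$. This yields a Fredholm hypersurface $Z_n\hookrightarrow\widehat{Y}\times A_n$ and a finite map $\rSpec(\mathcal{A}_n)\to Z_n$; Buzzard's admissible cover of $Z_n$ together with Chenevier's Lemme 6.2.10 then gives equidimensionality of dimension $\dim\widehat{T}_0=2$. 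The projectivity you are reaching for does appear, but only locally on the Buzzard cover (over each piece the relevant module is finite projective over an affinoid in weight space), not globally over $\widehat{T}$. Identifying the correct base and passing through the spectral variety is precisely the content you are missing.
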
 
\begin{proof}
Fix a pro-$p$ compact open subgroup of $G$, which is small enough so that $K_pK(N)$ is neat. Now Theorem \ref{lrfree} implies that $\widehat{H}_1^{\mathrm{new}}(K(N,l),\OO)_{\mbar}$ is a free $\OO[[K_p]]$-module, i.e. isomorphic to $\OO[[K_p]]^r$ as an $\OO[[K_p]]$-module for some integer $r$. For ease of notation, let $V$ denote the $G$-representation $\widehat{H}^1_{\mathrm{new}}(K(N,l),E)_{\mbar}$. By duality (and then inverting $p$), $V$ is isomorphic to $\mathcal{C}(K_p,E)^r$ as a $K_p$-representation, where $\mathcal{C}(K_p,E)$ is the space of continuous functions from $K_p$ to $E$, viewed as a representation of $K_p$ by letting $K_p$ act by right translation on functions. This shows that the space of locally analytic vectors $V_{la}$ is isomorphic to $\mathcal{C}^{la}(K_p,E)^r$ as a $K_p$ representation, where $\mathcal{C}^{la}(K_p,E)$ is the space of locally analytic functions from $K_p$ to $E$. We let $T_0$ denote the compact subgroup $T\cap K_p$ of $T$, and write $M$ for the strong dual of the Jacquet module $J_B(V_{la})'_b$, noting that $M$ is the space of global sections of the sheaf $\mathcal{E}$ over $\widehat{T}$. The rigid analytic variety $\widehat{T}_0$ parameterises the characters of $T_0$, and there is a natural map $\widehat{T} \rightarrow \widehat{T}_0$ induced by restriction of characters. 

The proof of Proposition 4.2.36 in \cite{MR2292633} shows that, writing $\widehat{T}_0$ as a union of admissible affinoid subdomains $\MaxSpec(A_n)$, there is an isomorphism of $E\{\{z,z^{-1}\}\}\hat{\otimes}_E A_n$-modules $$\label{fredholm}M_n:=M\hat{\otimes}_{\mathcal{C}^{an}(\widehat{T}_0,E)}A_n \cong E\{\{z,z^{-1}\}\}\hat{\otimes}_{E[z]}(W_n\hat{\otimes}_{E}A_n),$$ where $W_n$ is an $E$-Banach space and $z$ is a certain element of $T$ acting as a compact operator on $W_n\hat{\otimes}_{E}A_n$. Let $Y$ be the subgroup of $T$ generated by $z$, with corresponding rigid analytic character variety $\widehat{Y}$. We can now describe how to apply the machinery of \cite{Bu2,Chenun} to get our desired equidimensionality result. Let $\mathcal{E}_n$ be the pullback of $\mathcal{E}$ to $\widehat{T}\times_{\widehat{T}_0}A_n$, let $\mathcal{A}_n$ be the pullback of $\mathcal{A}$ to $\widehat{T}\times_{\widehat{T}_0}A_n$, and let $Z_n \hookrightarrow \widehat{Y}\times_E A_n$ be the Fredholm variety cut out by the characteristic power series of $z$ acting on $W_n\hat{\otimes}_{E}A_n$. Since $\mathcal{E}_n$ is the sheaf associated to $M_n$, there is a finite map $\rSpec(\mathcal{A}_n) \rightarrow Z_n$. In the language of \cite{Bu2}, $Z_n$ is the `spectral variety', and $\MaxSpec(A_n)$ is `weight space'.

An admissible cover $\mathcal{C}$ of $Z_n$ is constructed in section 4 of \cite{Bu2}, so that for $X \in \mathcal{C}$, with image $V$ in $\MaxSpec(A_n)$ under projection, the pullback of $\rSpec(\mathcal{A}_n)$ to $Y$ is given by the spectrum of a commutative algebra of endomorphisms (coming from the Hecke operators and the action of $T$) of a locally free, finite type $\OO(V)$-module. Now Lemme 6.2.10 of \cite{Chenun} applies, in the same way as the proof of Proposition 6.4.2 in \cite{Chenun}, to deduce that $\rSpec(\mathcal{A}_n)$ is equidimensional of dimension $2$, for all $n$, hence $\rSpec(\mathcal{A})$ is equidimensional of dimension $2$.
\end{proof}

It follows from Proposition 4.4.6 in \cite{emint} that $\widetilde{D}^{\mathrm{new}}$ is a product of an equidimension $1$ space $D^{\mathrm{new}}$ and weight space, by left exactness of the Jacquet functor and the fact that $\widehat{H}^1_{\mathrm{new}}(K(N,l),E)_{\mbar}$ is stable under the action of $\mathcal{C}(T_0,E)$ induced by the map labelled 4.2.5 in \cite{emint}. This fact holds since this twisting action is induced from a $\GL_2(\A_f)$-equivariant action on the direct limit of completed cohomology spaces over all tame levels, so it will preserve the kernel of $i^\dagger$ (which was defined by means of degeneracy maps).  
Note that we can also apply the same construction to $\widehat{H}^1(K(N,l),E)_{\mbar}$ to get an equidimension $1$ space $D$.
 
We now return to the notation of the introduction. For each non-Eisenstein maximal ideal $\mbar$ of $\T(K(N,l))$ let $D(N,l)_{\mbar}$ be the equidimensional closed rigid subspace of $D(N,l)_c$ whose points have residual Galois representation corresponding to the Hecke character induced by $\mbar$. Set $D(N,l)^{\mathrm{new}}_{\mbar}=D(N,l)^{\mathrm{new}}_c \cap D(N,l)_{\mbar}$. There is a natural isomorphism between $D(N,l)_{\mbar}$ and $D$, since both spaces are reduced, equidimensional and contain a Zariski dense set of points corresponding to the systems of Hecke eigenvalues arising from classical modular forms of tame level $\Gamma_1(N)\cap\Gamma_0(l)$ (see the proof of Theorem 7.5.8 in \cite{Emlgc}). Similarly, since $D^{\mathrm{new}}$ is equidimensional, it is isomorphic to the Zariski closure of the classical $l$-new points in $D(N,l)_{\mbar}$, so there is a closed embedding $D^{\mathrm{new}} \hookrightarrow D(N,l)^{\mathrm{new}}_{\mbar}$. We can now prove the theorem stated in the introduction.

\begin{theorem}
$D(N,l)^{\mathrm{new}}_c$ is equidimensional of dimension $1$. It is equal to the Zariski closure of the classical $l$-new points in $D(N,l)_c$.
\end{theorem}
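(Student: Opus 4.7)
The plan is to match $D(N,l)^{\mathrm{new}}_c$ residual-Galois-component by residual-Galois-component with the eigenvariety $D^{\mathrm{new}}$ built in the previous section. Every point of $D(N,l)_c$ has irreducible residual Galois representation and hence non-Eisenstein associated maximal ideal $\mbar$ of $\T(K(N,l))$, so we have disjoint decompositions $D(N,l)_c = \coprod_{\mbar\text{ non-Eis}} D(N,l)_{\mbar}$ and $D(N,l)^{\mathrm{new}}_c = \coprod_{\mbar} D(N,l)^{\mathrm{new}}_{\mbar}$. It therefore suffices to prove, for each such $\mbar$, the equality $D(N,l)^{\mathrm{new}}_{\mbar} = D^{\mathrm{new}}$ of closed rigid analytic subspaces of $D(N,l)_{\mbar} \cong D$; reassembling these gives equidimensionality of dimension $1$, and the Zariski density of classical $l$-new points in $D^{\mathrm{new}}$ (already noted in the discussion preceding the theorem) then yields the second assertion.

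Fix such a $\mbar$. A closed embedding $D^{\mathrm{new}} \hookrightarrow D(N,l)^{\mathrm{new}}_{\mbar}$ has already been produced, and $D^{\mathrm{new}}$ is equidimensional of dimension $1$ by the preceding corollary combined with the product decomposition $\widetilde{D}^{\mathrm{new}} \cong D^{\mathrm{new}} \times \mathcal{W}$. The substance of the theorem is thus the reverse inclusion, namely that every point $x$ of $D(N,l)^{\mathrm{new}}_{\mbar}$ already lies on $D^{\mathrm{new}}$. By definition, such an $x$ corresponds to a finite-slope overconvergent modular eigenform $f$ of tame level $\Gamma_1(N)\cap\Gamma_0(l)$, belonging to the $\mbar$-part, and annihilated by both trace maps to level $\Gamma_1(N)$. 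By Emerton's realization of finite-slope overconvergent eigenforms as eigenvectors in $J_B(V_{la})$, with $V:=\widehat{H}^1(K(N,l),E)_{\mbar}$ (see sections 2.3 and 4 of \cite{emint}), the eigensystem of $f$ occurs in this Jacquet module.

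The remaining step is to identify the two trace maps on overconvergent modular forms with the map $i^\dagger$ of the previous section: both are obtained as direct limits of the same finite-level degeneracy maps $Y(K_pK(N,l)) \rightarrow Y(K_pK(N))$, and the passage from completed cohomology to overconvergent forms is $\GL_2(\A_f)$-equivariant. Applying the left exact Jacquet functor to
\[
0 \rightarrow \widehat{H}^1_{\mathrm{new}}(K(N,l),E)_{\mbar,la} \rightarrow V_{la} \xrightarrow{i^\dagger} \widehat{H}^1(K(N),E)_{\m,la}^2
\]
shows that an eigenvector of $J_B(V_{la})$ killed by both trace maps comes from $J_B(\widehat{H}^1_{\mathrm{new}}(K(N,l),E)_{\mbar,la})$, which is precisely the space of global sections of the sheaf defining $\widetilde{D}^{\mathrm{new}}$. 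Hence $x \in D^{\mathrm{new}}$, finishing the equality $D(N,l)^{\mathrm{new}}_{\mbar} = D^{\mathrm{new}}$ and thereby the theorem.

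The hard part is the bookkeeping in this last paragraph: one must check carefully that the two trace maps which \emph{define} $D(N,l)^{\mathrm{new}}_c$ really are induced from the degeneracy maps underlying $i^\dagger$, rather than some twist of them, and that the comparison with $J_B$ is compatible with these limits. Granted this formal compatibility (analogous to what is used on the definite quaternion side in \cite{chicomp}), the rest of the argument is a straightforward assembly of results already established in the previous sections.
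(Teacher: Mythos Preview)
Your approach diverges from the paper's at the crucial point and leaves a real gap. You reduce to showing that a point $x$ of $D(N,l)^{\mathrm{new}}_{\mbar}$ lies on $D^{\mathrm{new}}$ by asserting that the overconvergent eigenform $f$ witnessing $x$ gives rise to an eigenvector in $J_B(V_{la})$ killed by $J_B(i^\dagger)$. But the identification $D(N,l)_{\mbar}\cong D$ established in the paper is an abstract isomorphism of \emph{reduced} eigenvarieties, obtained by matching Zariski-dense sets of classical points; it is not induced by a map of the underlying coherent sheaves (overconvergent forms on one side, Jacquet module on the other). Thus ``the eigensystem of $f$ occurs in $J_B(V_{la})$'' is available, but ``some eigenvector with that eigensystem lies in $\ker J_B(i^\dagger)$'' does not follow from knowing $f$ is killed by the trace maps on overconvergent forms. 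The ``formal compatibility'' you defer to the last paragraph is precisely the content of the theorem, not a bookkeeping check; in the definite quaternion setting of \cite{chicomp} the spaces are combinatorial enough that one can build such a comparison directly, but for modular curves this is not available.

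The paper sidesteps this entirely. First it invokes Proposition~4.7 of \cite{MR2111512} to see that the complement $\mathcal{Z}$ of $D^{\mathrm{new}}$ in $D(N,l)^{\mathrm{new}}_{\mbar}$ is zero-dimensional. A point $x_1\in\mathcal{Z}$ then sits on a one-dimensional irreducible component of $D(N,l)_{\mbar}$ whose classical points are $l$-old, so $x_1$ maps to a point $x\in D(N)_\m$. Being simultaneously old and new forces the \emph{eigenvalue} condition $T_l(x)^2=(l+1)^2 S_l(x)$. Because this is a relation among Hecke eigenvalues, it transfers automatically along the abstract isomorphism of eigenvarieties to the cohomology side: there one produces an eigenclass in the image of $i$ which is annihilated by $i^\dagger\!\cdot i=\begin{pmatrix} l+1 & T_l\\ S_l^{-1}T_l & l+1\end{pmatrix}$, hence lies in $\ker i^\dagger$, so $x_1\in D^{\mathrm{new}}$, a contradiction. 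The point is that the Hecke-operator description of $i^\dagger\!\cdot i$ converts the geometric condition ``killed by trace maps'' into a numerical condition on eigenvalues, and numerical conditions pass freely between the two models of the eigencurve.
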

\begin{proof}
It is enough to show that the closed embedding $D^{\mathrm{new}} \hookrightarrow D(N,l)^{\mathrm{new}}_{\mbar}$ is an isomorphism (for each non-Eisenstein $\mbar$). As in Proposition 4.7 of \cite{MR2111512} the complement of the image of this embedding is a union of irreducible components of dimension $0$. Suppose the complement $\mathcal{Z}$ is non-empty, containing a point $x_1$. This point must lie over a point $x$ of $D(N)_\m$, since in $D(N,l)_{\mbar}$ it lies in an irreducible component of dimension $1$ which only contains $l$-old classical points, since $x_1 \in \mathcal{Z}$. We can now compare the completed cohomology side and the overconvergent modular form side, since we can characterise `old and new' points like $x_1$ using Hecke operators - they come from points $x$ satisfying $T_l(x)^2-(l+1)^2S_l(x)=0$. On the cohomology side this corresponds to an eigenclass in the kernel of $i^\dagger \cdot i$ (recall that this composition can be expressed in terms of Hecke operators), so in particular $x_1$ corresponds to an eigenclass in the kernel of $i^\dagger$ - therefore it lies in  $D^\mathrm{new}$, contradicting the assumption that $x_1$ lies in the complement.
\end{proof}

\section*{Acknowledgements}The author is supported by an EPSRC doctoral training grant, and would like to thank Matthew Emerton for helpful correspondence, as well as communicating details of \cite{Emlg}, and Kevin Buzzard for many useful conversations.

\bibliography{dissbib}

\providecommand{\bysame}{\leavevmode\hbox to3em{\hrulefill}\thinspace}
\providecommand{\MR}{\relax\ifhmode\unskip\space\fi MR }
% \MRhref is called by the amsart/book/proc definition of \MR.
\providecommand{\MRhref}[2]{%
  \href{http://www.ams.org/mathscinet-getitem?mr=#1}{#2}
}
\providecommand{\href}[2]{#2}
\begin{thebibliography}{Eme06b}

\bibitem[BE10]{brem}
Christophe Breuil and Matthew Emerton, \emph{Repr{\'e}sentations $p$-adiques et
  ordinaires de $\mathrm{GL}_2(\mathbb{Q}_p)$ et compatibilit{\'e}
  local-global}, Ast\'erisque (2010), no.~331, 255--315.

\bibitem[Buz07]{Bu2}
K.~Buzzard, \emph{Eigenvarieties}, $L$-functions and Galois representations,
  LMS Lectures Notes Series, no. 320, LMS, 2007.

\bibitem[CE11]{emcalsumm}
Frank Calegari and Matthew Emerton, \emph{Completed cohomology --- a survey},
  2011, To appear in Non-abelian Fundamental Groups and Iwasawa Theory, CUP.
  Available at \url{http://www.math.northwestern.edu/~fcale/}.

\bibitem[Che04]{Chenun}
Ga{\"e}tan Chenevier, \emph{Familles {$p$}-adiques de formes automorphes pour
  {$\mathrm{GL}_n$}}, J. Reine Angew. Math. \textbf{570} (2004), 143--217.

\bibitem[Che05]{MR2111512}
\bysame, \emph{Une correspondance de {J}acquet-{L}anglands {$p$}-adique}, Duke
  Math. J. \textbf{126} (2005), no.~1, 161--194. \MR{MR2111512 (2006f:11144)}

\bibitem[DT94]{DT}
F.~Diamond and R.~Taylor, \emph{Non-optimal levels of mod $l$ modular
  representations}, Invent. Math. \textbf{115} (1994), 435--462.

\bibitem[Eme99]{emhida}
Matthew Emerton, \emph{A new proof of a theorem of {H}ida}, Internat. Math.
  Res. Notices (1999), no.~9, 453--472.

\bibitem[Eme06a]{MR2292633}
\bysame, \emph{Jacquet modules of locally analytic representations of
  {$p$}-adic reductive groups. {I}. {C}onstruction and first properties}, Ann.
  Sci. \'Ecole Norm. Sup. (4) \textbf{39} (2006), no.~5, 775--839.
  \MR{MR2292633 (2008c:22013)}

\bibitem[Eme06b]{Emlgc}
\bysame, \emph{A local-global compatibility conjecture in the {$p$}-adic
  {L}anglands programme for {${\rm GL}_{2/{\Bbb Q}}$}}, Pure Appl. Math. Q.
  \textbf{2} (2006), no.~2, part 2, 279--393. \MR{MR2251474 (2008d:11133)}

\bibitem[Eme06c]{emint}
\bysame, \emph{On the interpolation of systems of eigenvalues attached to
  automorphic {H}ecke eigenforms}, Invent. Math. \textbf{164} (2006), no.~1,
  1--84. \MR{MR2207783 (2007k:22018)}

\bibitem[Eme10]{Emlg}
\bysame, \emph{Local-global compatibility in the $p$-adic {L}anglands programme
  for $\mathrm{GL}_2/\mathbb{Q}$}, 2010, Preprint at
  \url{http://www.math.northwestern.edu/~emerton/preprints.html}.

\bibitem[New11]{chicomp}
James Newton, \emph{Geometric level raising for $p$-adic automorphic forms},
  Compos. Math. \textbf{147} (2011), no.~2, 335--354.

\bibitem[Pau11a]{NonComp}
A.~Paulin, \emph{Failure of the local to global principle in the eigencurve},
  2011, To appear in Manuscripta Mathematica.

\bibitem[Pau11b]{Pa}
\bysame, \emph{Local to global compatibility on the eigencurve}, 2011, To
  appear in Proceedings of the LMS.

\bibitem[Rib84]{MR804706}
Kenneth~A. Ribet, \emph{Congruence relations between modular forms},
  Proceedings of the {I}nternational {C}ongress of {M}athematicians, {V}ol.\ 1,
  2 ({W}arsaw, 1983) (Warsaw), PWN, 1984, pp.~503--514. \MR{MR804706
  (87c:11045)}

\end{thebibliography}
\end{document}